\begin{document}
\author{Marek Kara\'{s}, jakub Zygad\l o}
\title{On multidegree of tame and wild automorphisms of $\Bbb{C}^{3}$}
\keywords{polynomial automorphism, tame automorphism, wild automorphism, multidegree.\\
\textit{2000 Mathematics Subject Classification:} 14Rxx,14R10}
\date{}
\maketitle

\begin{abstract}
In this note we show that the set $\limfunc{mdeg}(\limfunc{Aut}(\Bbb{C}%
^{3}))\backslash \limfunc{mdeg}(\limfunc{Tame}(\Bbb{C}^{3}))$ is not empty.
Moreover we show that this set has infinitely many elements. Since for the
famous Nagata's example $N$ of wild automorphism, $\limfunc{mdeg}%
N=(5,3,1)\in $ $\limfunc{mdeg}(\limfunc{Tame}(\Bbb{C}^{3})),$ and since for
other known examples of wild automorphisms the multidegree is of the form $%
(1,d_{2},d_{3})$ (after permutation if neccesary), then we give the very
first exmple of wild automorphism $F\,$of $\Bbb{C}^{3}$ with $\limfunc{mdeg}%
F\notin $ $\limfunc{mdeg}(\limfunc{Tame}(\Bbb{C}^{3})).$

We also show that, if $d_{1},d_{2}$ are odd numbers such that $\gcd \left(
d_{1},d_{2}\right) =1,$ then $\left( d_{1},d_{2},d_{3}\right) \in \limfunc{%
mdeg}(\limfunc{Tame}(\Bbb{C}^{3}))$ if and only if $d_{3}\in d_{1}\Bbb{N}%
+d_{2}\Bbb{N}.$ This a crucial fact that we use in the proof of the main
result.
\end{abstract}

\section{Introduction}

Let us recall that a tame automorphism is, by definition, a composition of
linear automorphisms and triangular automorphisms, where a triangular
automorphism is a mapping of the following form 
\begin{equation*}
T:\Bbb{C}^{n}\ni \left\{ 
\begin{array}{l}
x_{1} \\ 
x_{2} \\ 
\vdots \\ 
x_{n}
\end{array}
\right\} \mapsto \left\{ 
\begin{array}{l}
x_{1} \\ 
x_{2}+f_{2}(x_{1}) \\ 
\vdots \\ 
x_{n}+f_{n}(x_{1},\ldots ,x_{n-1})
\end{array}
\right\} \in \Bbb{C}^{n}.
\end{equation*}
Recall, also, that an automorphism is called wild if it is not tame.

By multidegree of any polynomial mapping $F=(F_{1},\ldots ,F_{n}):\Bbb{C}%
^{n}\rightarrow \Bbb{C}^{n}$, denoted $\limfunc{mdeg}F,$ we call the
sequence $(\deg F_{1},\ldots ,\deg F_{n}).$ By $\limfunc{Tame}(\Bbb{C}^{n})$
we will denote the group of all tame automorphimsm of $\Bbb{C}^{n},$ and by $%
\limfunc{mdeg}$ the mapping from the set of all polynomial endomorphisms of $%
\Bbb{C}^{n}$ into the set $\Bbb{N}^{n}.$ In \cite{Karas1} it was proven that 
$(3,4,5),(3,5,7),(4,5,7),(4,5,11)\notin \limfunc{mdeg}(\limfunc{Tame}(\Bbb{C}%
^{3})).$ Next in \cite{Karas3} it was proved that $\left(
3,d_{2},d_{3}\right) \in \limfunc{mdeg}(\limfunc{Tame}(\Bbb{C}^{3})),$ for $%
3\leq d_{2}\leq d_{3},$ if and only if $3|d_{2}$ or $d_{3}\in 3\Bbb{N}+d_{2}%
\Bbb{N},$ and in \cite{Karas2} it was shoven that for $d_{3}\geq
d_{2}>d_{1}\geq 3,$ where $d_{1}$ and $d_{2}$ are prime numbers, $%
(d_{1},d_{2},d_{3})\in \limfunc{mdeg}(\limfunc{Tame}(\Bbb{C}^{3}))$ if and
only if $d_{3}\in d_{1}\Bbb{N}+d_{2}\Bbb{N}.$ In this paper we give
generalization of this result (Theorem \ref{tw_d1_d2_odd} below), and using
this fact we show the following theorem.

\begin{theorem}
\label{tw_main}\label{main}The set $\limfunc{mdeg}(\limfunc{Aut}(\Bbb{C}%
^{3}))\backslash \limfunc{mdeg}(\limfunc{Tame}(\Bbb{C}^{3}))$ has infinitely
many elements.
\end{theorem}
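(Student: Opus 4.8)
The plan is to produce, for infinitely many triples $(d_{1},d_{2},d_{3})$, an honest automorphism of $\Bbb{C}^{3}$ realizing that multidegree while Theorem \ref{tw_d1_d2_odd} certifies the triple is \emph{not} a tame multidegree. Concretely, I would look for triples in which two of the three degrees, say $d_{1},d_{2}$, are odd and coprime while the third satisfies $d_{3}\notin d_{1}\Bbb{N}+d_{2}\Bbb{N}$; then Theorem \ref{tw_d1_d2_odd} gives $(d_{1},d_{2},d_{3})\notin \limfunc{mdeg}(\limfunc{Tame}(\Bbb{C}^{3}))$, and since $\limfunc{mdeg}(\limfunc{Tame}(\Bbb{C}^{3}))$ is invariant under permuting coordinates (a coordinate permutation is linear, hence tame), the conclusion does not depend on which slot carries $d_{3}$. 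The only remaining task is then to \emph{realize} such triples by genuine automorphisms.

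For the realization I would start from Nagata's automorphism
\[
N(x,y,z)=\bigl(x-2hy-h^{2}z,\;y+hz,\;z\bigr),\qquad h=y^{2}+xz,
\]
with $\limfunc{mdeg}N=(5,3,1)$, and precompose it with the triangular (hence tame) map $T_{m}(x,y,z)=(x,y,z+x^{m})$ for an odd integer $m\geq 3$. Setting $F_{m}=N\circ T_{m}$, an automorphism of $\Bbb{C}^{3}$, I claim $\limfunc{mdeg}F_{m}=(3m+2,\,2m+1,\,m)$. Substituting $(x,y,z+x^{m})$ into $N$ replaces $h$ by $y^{2}+xz+x^{m+1}$, whose leading form is $x^{m+1}$; chasing this through the three components, the $z$-coordinate $z+x^{m}$ has degree $m$, the $y$-coordinate $y+h\,(z+x^{m})$ has leading term $x^{m+1}\cdot x^{m}=x^{2m+1}$, and the $x$-coordinate $x-2hy-h^{2}(z+x^{m})$ has leading term $-x^{2(m+1)}\cdot x^{m}=-x^{3m+2}$. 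A short check shows no other monomial attains these degrees, so there is no cancellation and the three degrees are exactly $3m+2$, $2m+1$, $m$.

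It then remains to verify the arithmetic. For odd $m$ both $3m+2$ and $2m+1$ are odd, and $\gcd(3m+2,2m+1)=\gcd(2m+1,m+1)=\gcd(m+1,m)=1$, so the pair $(d_{1},d_{2})=(3m+2,2m+1)$ is odd and coprime. Since $m<2m+1=\min(d_{1},d_{2})$, the value $d_{3}=m$ cannot be a nonnegative combination of $d_{1},d_{2}$, i.e. $m\notin (3m+2)\Bbb{N}+(2m+1)\Bbb{N}$. Hence Theorem \ref{tw_d1_d2_odd} gives $\limfunc{mdeg}F_{m}\notin \limfunc{mdeg}(\limfunc{Tame}(\Bbb{C}^{3}))$, while trivially $\limfunc{mdeg}F_{m}\in \limfunc{mdeg}(\limfunc{Aut}(\Bbb{C}^{3}))$. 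As $m$ ranges over the odd integers $\geq 3$ the third entry takes infinitely many distinct values, so the multidegrees $(3m+2,2m+1,m)$ are pairwise distinct and all lie in $\limfunc{mdeg}(\limfunc{Aut}(\Bbb{C}^{3}))\backslash \limfunc{mdeg}(\limfunc{Tame}(\Bbb{C}^{3}))$.

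The genuinely delicate point is the \emph{design} of the construction rather than any single computation. A careless way of enlarging the third degree---for instance postcomposing $N$ with a triangular map in $z$---only produces values of $d_{3}$ lying \emph{inside} the semigroup $d_{1}\Bbb{N}+d_{2}\Bbb{N}$, where Theorem \ref{tw_d1_d2_odd} is powerless. The key idea is instead to \emph{precompose}, inflating $h$ so that the two Nagata coordinates grow to the odd coprime degrees $3m+2$ and $2m+1$ while the new third degree $m$ stays below both generators and is therefore automatically outside their semigroup. The only technical care needed is the leading-form bookkeeping that rules out degree-dropping cancellation; everything else is then immediate.
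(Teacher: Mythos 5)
Your realization step is sound: $F_{m}=N\circ T_{m}$ is a composition of automorphisms, and the leading-form bookkeeping giving $\limfunc{mdeg}F_{m}=(3m+2,2m+1,m)$ is correct. The gap is in how you invoke Theorem \ref{tw_d1_d2_odd}. That theorem is stated for $d_{3}\geq d_{2}>d_{1}\geq 3$: the oddness and coprimality hypothesis is placed on the two \emph{smallest} entries, and the criterion is whether the \emph{largest} entry lies in the semigroup they generate. Permutation invariance of $\limfunc{mdeg}(\limfunc{Tame}(\Bbb{C}^{3}))$ only entitles you to sort your triple into increasing order before applying the theorem; it does not let you choose which entries play the roles of $d_{1},d_{2}$ and which plays $d_{3}$. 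The general principle you state --- two of the three degrees odd and coprime and the third outside their semigroup forces non-tameness --- is false: for $(3,5,9)$ the entries $5$ and $9$ are odd and coprime and $3\notin 5\Bbb{N}+9\Bbb{N}$, yet $(3,5,9)\in \limfunc{mdeg}(\limfunc{Tame}(\Bbb{C}^{3}))$ by Proposition \ref{prop_sum_d_i}, since $9=3\cdot 3$. Consequently the arithmetic check you actually performed ($m<\min(3m+2,2m+1)$, hence $m\notin (3m+2)\Bbb{N}+(2m+1)\Bbb{N}$) verifies the wrong condition and establishes nothing.

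Your family is nevertheless salvageable, because the correctly ordered check also happens to succeed: sorted, the triple is $(m,2m+1,3m+2)$ with $d_{1}=m\geq 3$ and $d_{2}=2m+1$ both odd and coprime (indeed $\gcd(m,2m+1)=1$), and $3m+2\notin m\Bbb{N}+(2m+1)\Bbb{N}$ for $m\geq 3$: taking $k_{2}$ copies of $2m+1$, the case $k_{2}=0$ fails since $m\nmid 3m+2$, the case $k_{2}=1$ fails since $m\nmid m+1$, and $k_{2}\geq 2$ already exceeds $3m+2$. With that verification substituted for yours, the proof goes through, and it is a genuinely different --- arguably simpler, non-iterative --- construction than the paper's, which instead shows by induction that $\limfunc{mdeg}(T\circ N)^{n}=(4n-3,4n-1,4n+1)$ for the coordinate swap $T$ and applies Theorem \ref{tw_d1_d2_odd} to those triples for $n\geq 2$.
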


Notice that the existence of wild automorphism does not imply the above
result. For example the famous Nagata's exmple is wild, but its muldidegree
is (after pemutetion) $(1,3,5)\in \limfunc{mdeg}(\limfunc{Tame}(\Bbb{C}%
^{3})),$ because for instance the mapp $\Bbb{C}^{3}\ni (x,y,z)\mapsto
(x,y+x^{3},z+x^{5})\in \Bbb{C}^{3}$ is a tame automorphism.

\section{{\textsl{Tame automorphisms of }}$\Bbb{C}^{3}${\textsl{with
multidegree of the form }}$(d_{1},d_{2},d_{3})$ {\textsl{with }}$\gcd
(d_{1},d_{2})=1${\textsl{\ and odd }}$d_{1},d_{2},$}

In the proof of Theorem \ref{main} we will use the following genrealization
of the result of \cite{Karas2}.

\begin{theorem}
\label{tw_odd_odd_gcd_1}\label{tw_d1_d2_odd}Let $d_{3}\geq d_{2}>d_{1}\geq 3$
be positive integers. If $d_{1}$ and $d_{2}$ are odd numbers such that $\gcd
\left( d_{1},d_{2}\right) =1$, then $(d_{1},d_{2},d_{3})\in \limfunc{mdeg}(%
\limfunc{Tame}(\Bbb{C}^{3}))$ if and only if $d_{3}\in d_{1}\Bbb{N}+d_{2}%
\Bbb{N},$ i.e. if and only if $d_{3}$ is a linear combination of $d_{1}$ and 
$d_{2}$ with coefficients in $\Bbb{N}.$
\end{theorem}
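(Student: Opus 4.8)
The plan is to prove both implications, the reverse (``if'') direction being an explicit construction and the forward (``only if'') direction carrying all the weight. For the ``if'' direction, suppose $d_{3}=ad_{1}+bd_{2}$ with $a,b\in \Bbb{N}$. I would exhibit a concrete tame automorphism realizing $(d_{1},d_{2},d_{3})$, for instance the composition of $\Theta (x,y,z)=(x+z^{d_{1}},\,y+z^{d_{2}},\,z)$ with $\Psi (x,y,z)=(x,y,z+x^{a}y^{b})$: both are triangular (up to a permutation of variables) and hence tame, and the composition $\Psi \circ \Theta $ sends $(x,y,z)$ to $(x+z^{d_{1}},\,y+z^{d_{2}},\,z+(x+z^{d_{1}})^{a}(y+z^{d_{2}})^{b})$, whose coordinates have degrees $d_{1}$, $d_{2}$ and $ad_{1}+bd_{2}=d_{3}$. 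This construction uses neither the oddness nor the coprimality of $d_{1},d_{2}$, so I would only sketch it.

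The forward direction is the crux. Let $F=(F_{1},F_{2},F_{3})$ be a tame automorphism with $\limfunc{mdeg}F=(d_{1},d_{2},d_{3})$, reordered so that $\deg F_{1}\leq \deg F_{2}\leq \deg F_{3}$. The engine is the Shestakov--Umirbaev reduction theorem, in the form organized by Kuroda: a non-linear tame automorphism of $\Bbb{C}^{3}$ admits either an elementary reduction, lowering some $\deg F_{i}$ by subtracting a polynomial in the other two coordinates, or a reduction of one of the types I--IV. I would first rule out elementary reductions of the two lower coordinates. If $F_{1}$ or $F_{2}$ were elementarily reducible, its leading form would lie in the algebra generated by the leading forms of the other two coordinates, forcing $d_{1}\in d_{2}\Bbb{N}+d_{3}\Bbb{N}$ or $d_{2}\in d_{1}\Bbb{N}+d_{3}\Bbb{N}$. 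The first is impossible since $d_{1}<d_{2}\leq d_{3}$ and the leading form is nonzero; the second forces either $d_{2}=d_{3}$ (a boundary case I would treat separately) or $d_{1}\mid d_{2}$, which is excluded because $\gcd (d_{1},d_{2})=1$ and $d_{1}\geq 3$.

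Next I would eliminate the Shestakov--Umirbaev reductions of types I--IV; here coprimality and oddness enter decisively. Such a reduction requires the leading forms $\overline{F_{1}},\overline{F_{2}}$ of the two smaller coordinates to be algebraically dependent; being homogeneous, this means $\overline{F_{1}}=c_{1}g^{p}$ and $\overline{F_{2}}=c_{2}g^{q}$ for a common homogeneous $g$, whence $\deg g\mid \gcd (d_{1},d_{2})=1$, so $g$ is linear and $(p,q)=(d_{1},d_{2})$. With the leading forms pinned down to powers of a common linear form, the defining numerical conditions of types I--IV carry a parity/divisibility requirement --- concretely, one of $d_{1},d_{2}$ must be even --- which contradicts the hypothesis that both are odd. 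This is exactly the point at which the primality assumption of \cite{Karas2} is weakened to ``odd and coprime.'' Having excluded every alternative, the reduction theorem forces an elementary reduction of the top coordinate $F_{3}$: its leading form then lies in $\Bbb{C}[\overline{F_{1}},\overline{F_{2}}]$ and is homogeneous of degree $d_{3}$, so $d_{3}=ad_{1}+bd_{2}$ for some $a,b\in \Bbb{N}$, which is the desired conclusion.

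The main obstacle I anticipate is the verification of the parity/divisibility condition inside the Shestakov--Umirbaev reductions: one must enter the precise definitions of the four reduction types, record how the degrees of the intermediate quantities (the relevant Poisson brackets and the ``error'' polynomials of the form $F_{2}-g(F_{1})$) are constrained once $\overline{F_{1}},\overline{F_{2}}$ are powers of a common linear form, and check in each type that an odd pair $(d_{1},d_{2})$ is genuinely excluded. A secondary technical point is the bookkeeping under reordering and in the boundary case $d_{2}=d_{3}$, which I would organize as an induction on $d_{1}+d_{2}+d_{3}$, so that any reduction producing a tame automorphism of strictly smaller total degree feeds back into the hypothesis.
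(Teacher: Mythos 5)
Your skeleton matches the paper's: invoke the Shestakov--Umirbaev theorem that a non-linear tame automorphism of $\Bbb{C}^{3}$ admits an elementary reduction or a reduction of types I--IV, exclude types I--IV using the oddness of $d_{1},d_{2}$, exclude elementary reductions of the two lower coordinates using coprimality, and read off $d_{3}\in d_{1}\Bbb{N}+d_{2}\Bbb{N}$ from the only surviving case; the ``if'' direction is indeed the routine triangular construction (Proposition \ref{prop_sum_d_i}).

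There is, however, a genuine gap at the step you treat as automatic. You claim that if $F_{i}$ admits an elementary reduction by $g(F_{j},F_{k})$, then $\overline{F_{i}}=\overline{g(F_{j},F_{k})}$ lies in $\Bbb{C}[\overline{F_{j}},\overline{F_{k}}]$ and hence $d_{i}\in d_{j}\Bbb{N}+d_{k}\Bbb{N}$. This is precisely what fails in general: when $\overline{F_{j}},\overline{F_{k}}$ are algebraically dependent (e.g.\ both powers of a common linear form, a possibility your own type I--IV discussion acknowledges), cancellation can make $\deg g(F_{j},F_{k})$ drop far below $\max (ad_{j}+bd_{k})$ and push its leading form outside $\Bbb{C}[\overline{F_{j}},\overline{F_{k}}]$ --- already $G(f,h)=h-f^{2}=y$ for $f=x$, $h=x^{2}+y$ shows this. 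Controlling that cancellation is the whole content of the Shestakov--Umirbaev degree estimates, and it is where the paper does its real work: it uses the inequality $\deg G(f,g)\geq q\left( p\deg g-\deg g-\deg f+\deg [f,g]\right) +r\deg g$ with $p=\deg f/\gcd (\deg f,\deg g)$ and $\deg _{y}G=pq+r$ (Proposition \ref{prop_deg_g_fg}). For $g(F_{1},F_{2})$ this forces $q=0$, i.e.\ $\deg _{y}g<d_{1}$, and only then does $\gcd (d_{1},d_{2})=1$ make the candidate degrees $id_{2}+d_{1}\Bbb{N}$, $0\leq i<d_{1}$, pairwise disjoint, so no cancellation survives and $d_{3}\in d_{1}\Bbb{N}+d_{2}\Bbb{N}$. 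For $g(F_{1},F_{3})$ it uses $p>1$ and $p\neq 2$ --- a second, separate use of the oddness of $d_{1}$ that your plan does not anticipate --- to force $g=g(x)$ and a contradiction with $\gcd (d_{1},d_{2})=1$. Without this estimate or an equivalent substitute, your reduction of the elementary cases to arithmetic statements about $d_{j}\Bbb{N}+d_{k}\Bbb{N}$ does not go through. (Your closing induction on $d_{1}+d_{2}+d_{3}$ is also unnecessary: the argument shows the hypothetical automorphism admits no reduction at all, contradicting the reduction theorem directly.)
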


In the proof of this theorem we will need the following results that we
include here for the convenience of the reader.

\begin{theorem}
\label{tw_sywester}If $d_{1},d_{2}$ are positive integers such that $\gcd
(d_{1},d_{2})=1,$ then for every integer $k\geq (d_{1}-1)(d_{2}-1)$ there
are $k_{1},k_{2}\in \Bbb{N}$ such that 
\begin{equation*}
k=k_{1}d_{1}+k_{2}d_{2}.
\end{equation*}
Moreover $(d_{1}-1)(d_{2}-1)-1\notin d_{1}\Bbb{N}+d_{2}\Bbb{N}.$
\end{theorem}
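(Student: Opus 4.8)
The statement is the classical Sylvester--Frobenius (``coin problem'') result, whose Frobenius number for coprime $d_{1},d_{2}$ equals $d_{1}d_{2}-d_{1}-d_{2}=(d_{1}-1)(d_{2}-1)-1$, so I would prove its two assertions separately. The structural fact underlying both is that, once one integer solution of $k=k_{1}d_{1}+k_{2}d_{2}$ is known, every integer solution is obtained by the shift $(k_{1},k_{2})\mapsto (k_{1}+d_{2},k_{2}-d_{1})$, because $\gcd (d_{1},d_{2})=1$.

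For the representability claim, the plan is to start from a B\'{e}zout identity $au_{}d_{1}+b_{}d_{2}=1$ and multiply by $k$ to obtain one integer solution $k=u d_{1}+v d_{2}$ with $u,v\in \Bbb{Z}$. Using the shift above I would replace this solution by the one whose first coordinate $k_{1}$ lies in $\{0,1,\ldots ,d_{2}-1\}$, which is always possible by reducing $u$ modulo $d_{2}$; then $k_{2}=(k-k_{1}d_{1})/d_{2}$ is automatically an integer, and it only remains to check $k_{2}\geq 0$. Here the hypothesis enters cleanly: since $k_{1}\leq d_{2}-1$, one has $k_{2}d_{2}=k-k_{1}d_{1}\geq k-(d_{2}-1)d_{1}$, and for $k\geq (d_{1}-1)(d_{2}-1)$ the right-hand side is at least $(d_{1}-1)(d_{2}-1)-(d_{2}-1)d_{1}=-(d_{2}-1)>-d_{2}$, forcing $k_{2}>-1$ and hence $k_{2}\in \Bbb{N}$.

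For the non-representability of $N=(d_{1}-1)(d_{2}-1)-1=d_{1}d_{2}-d_{1}-d_{2}$, I would argue by contradiction from a hypothetical $N=k_{1}d_{1}+k_{2}d_{2}$ with $k_{1},k_{2}\in \Bbb{N}$, using a residue argument. Reducing modulo $d_{1}$ gives $k_{2}d_{2}\equiv N\equiv -d_{2}\pmod{d_{1}}$, so $(k_{2}+1)d_{2}\equiv 0\pmod{d_{1}}$; coprimality of $d_{1},d_{2}$ then yields $d_{1}\mid (k_{2}+1)$, whence $k_{2}\geq d_{1}-1$. By the symmetric computation modulo $d_{2}$ I would obtain $k_{1}\geq d_{2}-1$. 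Adding the two contributions gives $N=k_{1}d_{1}+k_{2}d_{2}\geq (d_{2}-1)d_{1}+(d_{1}-1)d_{2}=2d_{1}d_{2}-d_{1}-d_{2}$, which strictly exceeds $N=d_{1}d_{2}-d_{1}-d_{2}$, the desired contradiction.

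The argument is elementary and I do not expect a genuine obstacle; the only points demanding care are choosing the correct coset representative $k_{1}\in \{0,\ldots ,d_{2}-1\}$ in the first part and tracking the inequalities so that the threshold $(d_{1}-1)(d_{2}-1)$ comes out sharp, together with the repeated use of $\gcd (d_{1},d_{2})=1$ to pass from a divisibility congruence to an actual lower bound on the coefficients in the second part.
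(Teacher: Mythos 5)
Your proof is correct, but there is nothing in the paper to compare it with: the authors state Theorem \ref{tw_sywester} as a known classical result (the Sylvester--Frobenius coin problem) ``for the convenience of the reader'' and give no proof of it. Your argument is the standard one and both halves check out: normalizing the B\'ezout solution so that $k_{1}\in\{0,\ldots,d_{2}-1\}$ gives $k_{2}d_{2}\geq k-(d_{2}-1)d_{1}\geq -(d_{2}-1)>-d_{2}$, hence $k_{2}\geq 0$; and in the non-representability part the step from $d_{1}\mid(k_{2}+1)$ to $k_{2}\geq d_{1}-1$ is valid because $k_{2}+1\geq 1$ forces $k_{2}+1$ to be a \emph{positive} multiple of $d_{1}$, after which $k_{1}d_{1}+k_{2}d_{2}\geq 2d_{1}d_{2}-d_{1}-d_{2}>d_{1}d_{2}-d_{1}-d_{2}$ yields the contradiction. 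The degenerate cases $d_{1}=1$ or $d_{2}=1$ (where $(d_{1}-1)(d_{2}-1)-1=-1$ is trivially not in $d_{1}\Bbb{N}+d_{2}\Bbb{N}$) are also covered by your argument.
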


\begin{proposition}
\textit{(\cite{Karas1}, Proposition 2.2) }\label{prop_sum_d_i}If for a
sequence of integers $1\leq d_{1}\leq \ldots \leq d_{n}$ there is $i\in
\{1,\ldots ,n\}$ such that 
\begin{equation*}
d_{i}=\sum_{j=1}^{i-1}k_{j}d_{j}\qquad \text{with }k_{j}\in \Bbb{N},
\end{equation*}
then there exists a tame automorphism $F$ of $\Bbb{C}^{n}$ with $\limfunc{%
mdeg}F=(d_{1},\ldots ,d_{n}).$
\end{proposition}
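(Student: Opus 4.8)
The plan is to build the required automorphism explicitly as a composition of elementary triangular maps, using the index $i$ and the relation $d_{i}=\sum_{j=1}^{i-1}k_{j}d_{j}$ to manufacture a single coordinate of degree $d_{i}$ out of the lower-degree ones. First I would record two reductions. Since $d_{1}\geq 1$ while an empty sum is $0$, the hypothesis forces $i\geq 2$, so there is always at least one preceding index available. Second, because composing an automorphism with a linear coordinate permutation is again tame and merely permutes the entries of its multidegree, it suffices to produce a tame automorphism whose coordinate degrees are $d_{1},\ldots ,d_{n}$ in \emph{some} order; a final permutation then sorts them into the prescribed increasing order. The essential idea is to keep the first variable as a degree-one generator for as long as possible, build the other coordinates from it, and only at the end inject the combination into the first coordinate.

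Starting from the identity, I would apply the triangular automorphism that replaces $x_{m}$ by $x_{m}+x_{1}^{d_{m-1}}$ for each $m\in \{2,\ldots ,i\}$, and $x_{m}$ by $x_{m}+x_{1}^{d_{m}}$ for each $m\in \{i+1,\ldots ,n\}$, leaving $x_{1}$ fixed. After this the coordinate in slot $m$ has degree $d_{m-1}$ for $2\leq m\leq i$ and degree $d_{m}$ for $m>i$, while slot $1$ still has degree $1$; thus every target degree except $d_{i}$ has been realized, with $d_{1},\ldots ,d_{i-1}$ sitting precisely in slots $2,\ldots ,i$. Writing $y_{j+1}=x_{j+1}+x_{1}^{d_{j}}$ for the current content of slot $j+1$, the final move is one elementary automorphism acting on the first coordinate, namely adding to it the product $\prod_{j=1}^{i-1}y_{j+1}^{k_{j}}$. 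This polynomial does not involve the first coordinate, so the move is legitimate, and the resulting map $F$ is a composition of elementary and linear automorphisms, hence tame.

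The heart of the verification is that this last move gives slot $1$ degree exactly $d_{i}$. Each $y_{j+1}$ has a nonzero top-degree form of degree $d_{j}$, so the product has top-degree form equal to the product of those forms, which is nonzero because the polynomial ring is a domain, and therefore has degree $\sum_{j=1}^{i-1}k_{j}d_{j}=d_{i}$. Since $d_{i}\geq d_{1}\geq 1$ and, when $d_{i}\geq 2$, this form has degree strictly larger than $\deg x_{1}=1$, adding $x_{1}$ cannot cancel it, so slot $1$ acquires degree $d_{i}$; the degenerate case $d_{i}=1$ forces $d_{1}=\cdots =d_{i}=1$ and is immediate. At this point the multiset of coordinate degrees is exactly $\{d_{1},\ldots ,d_{n}\}$, realized in slot order $(d_{i},d_{1},\ldots ,d_{i-1},d_{i+1},\ldots ,d_{n})$, and composing with the linear permutation that rearranges this tuple into $(d_{1},\ldots ,d_{n})$ finishes the proof.

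The only genuinely delicate point is this non-cancellation of leading forms in the product step, together with the bookkeeping check that the one skipped target $d_{i}$ is exactly compensated by the coordinate built in slot $1$. The indices $j$ with $k_{j}=0$ cause no difficulty: the corresponding slots are still built to degree $d_{j}$, so that $d_{j}$ genuinely appears in the final multiset, while they contribute only a trivial factor to the product.
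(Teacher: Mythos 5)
Your proof is correct. Note that the paper itself does not prove this proposition; it is quoted from \cite{Karas1} (Proposition 2.2) for the reader's convenience, and the construction given there is essentially the one you found: a triangular map planting powers of a degree-one seed variable in the other slots, followed by one elementary map that deposits the product $\prod_j(\cdot)^{k_j}$, of degree $\sum_j k_jd_j=d_i$, into the seed slot. The only cosmetic difference is that the cited construction uses the $i$-th coordinate itself as the seed (adding $x_i^{d_j}$ to $x_j$ for $j\neq i$, then adding $\prod_{j<i}(x_j+x_i^{d_j})^{k_j}$ to $x_i$), which produces the degrees already in the order $(d_1,\ldots,d_n)$ and makes your final permutation step unnecessary; your bookkeeping with slot $1$ as the seed, the non-cancellation argument via leading forms, and the observation that $i\geq 2$ are all sound.
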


\begin{proposition}
\textit{(\cite{Karas3}, Proposition 2.4) }\label{prop_deg_g_fg}Let $f,g\in 
\Bbb{C}[X_{1},\ldots ,X_{n}]$ be such that $f,g$ are algebraically
independent and $\overline{f}\notin \Bbb{C}\left[ \overline{g}\right] ,%
\overline{g}\notin \Bbb{C}\left[ \overline{f}\right] $ ($\overline{h}$ means
the highest homogeneous part of $h$). Assume that $\deg f<\deg g,$ put 
\begin{equation*}
p=\frac{\deg f}{\gcd \left( \deg f,\deg g\right) },
\end{equation*}
and let $G(x,y)\in \Bbb{C}[x,y]$ with $\deg _{y}G(x,y)=pq+r,0\leq r<p.$ Then 
\begin{equation*}
\deg G(f,g)\geq q\left( p\deg g-\deg g-\deg f+\deg [f,g]\right) +r\deg g.
\end{equation*}
\end{proposition}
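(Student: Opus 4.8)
The plan is to prove the equivalent bound $\deg G(f,g)\ge mb-q\delta$, where I write $a=\deg f$, $b=\deg g$, $m=\deg_y G=pq+r$ and $\delta=a+b-\deg[f,g]$; expanding $mb-q\delta=(pq+r)b-q(a+b-\deg[f,g])$ recovers exactly the asserted right-hand side $q(p\deg g-\deg g-\deg f+\deg[f,g])+r\deg g$. Set $d=\gcd(a,b)$, so $p=a/d$ and $s=b/d$ satisfy $\gcd(p,s)=1$ and $sa=pb=psd$. Throughout I use only two properties of the bracket: its Leibniz rule in each argument, and the degree bound $\deg[u,v]\le\deg u+\deg v-2$ (the coefficients of $[u,v]=du\wedge dv$ are $2\times2$ Jacobian minors, each of degree at most $(\deg u-1)+(\deg v-1)$). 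Note $[f,g]\neq0$ since $f,g$ are algebraically independent. The argument is an induction on $q$.

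For the base case $q=0$ I claim there is no cancellation. Writing $G(x,y)=\sum_{i=0}^{m}a_i(x)y^i$ with $m=r<p$ and $a_m\neq0$, the summand $a_i(f)g^i$ has leading part $\overline{a_i}(\overline f)\,\overline g^{\,i}$ of degree $a\deg a_i+ib$. Two of these degrees coincide only if $a(\deg a_i-\deg a_j)=(j-i)b$, i.e. $p(\deg a_i-\deg a_j)=(j-i)s$; since $\gcd(p,s)=1$ this forces $p\mid(j-i)$, which is impossible for $0<|i-j|\le m<p$. Hence the leading degrees are pairwise distinct, no cancellation occurs, and $\deg G(f,g)=\max_i(a\deg a_i+ib)\ge mb=rb$, the bound for $q=0$.

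For the inductive step take $m\ge p$, so $q\ge1$. If $\deg G(f,g)\ge mb$ we are done at once, so assume the top part of $\sum_i a_i(f)g^i$ cancels. Then $\overline f$ and $\overline g$ are algebraically dependent; since $\overline f^{\,s}$ and $\overline g^{\,p}$ are homogeneous of the same degree $psd$, any relation yields a binary form vanishing at $(\overline f^{\,s},\overline g^{\,p})$, which factors in the domain $\Bbb{C}[x_1,\dots,x_n]$ to give $\overline f^{\,s}=c\,\overline g^{\,p}$ for some $c\neq0$. Put $h=g^p-c^{-1}f^s$, so $\deg h<pb$, while the Leibniz rule gives $[f,h]=p\,g^{p-1}[f,g]\neq0$ and hence $\deg[f,h]=(p-1)b+\deg[f,g]$; comparing with $\deg[f,h]\le a+\deg h-2$ yields $\deg h\ge pb-\delta+2$. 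Now divide $G$ by $y^p-c^{-1}x^s$ in $y$: $G(x,y)=Q(x,y)(y^p-c^{-1}x^s)+R(x,y)$ with $\deg_y R<p$ and $\deg_y Q=m-p=p(q-1)+r$, so $G(f,g)=Q(f,g)\,h+R(f,g)$. As $\Bbb{C}[x_1,\dots,x_n]$ is a domain, $\deg(Q(f,g)h)=\deg Q(f,g)+\deg h$, and the inductive hypothesis $\deg Q(f,g)\ge(m-p)b-(q-1)\delta$ together with $\deg h\ge pb-\delta+2$ gives $\deg(Q(f,g)h)\ge mb-q\delta+2$.

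The remaining point, which I expect to be the main obstacle, is to rule out a fatal cancellation between $Q(f,g)h$ and the remainder $R(f,g)$ when passing from $\deg(Q(f,g)h)$ to $\deg G(f,g)=\deg(Q(f,g)h+R(f,g))$. Since $R$ has $y$-degree below $p$, the base-case analysis pins its leading behaviour, and one must verify that its degree cannot exactly match and annihilate the top of $Q(f,g)h$; the extra $+2$ of slack provides room, but making this airtight requires tracking leading forms through the reduction, and this is precisely where the hypotheses $\overline f\notin\Bbb{C}[\overline g]$ and $\overline g\notin\Bbb{C}[\overline f]$ enter in full force (they exclude the degenerate regimes $p=1$ or $s=1$, in which one leading part is a polynomial in the other, and thereby guarantee the resonance is genuine). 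This is the analogue of the case-by-case reductions in the Shestakov--Umirbaev method. Once the interference of $R$ is controlled, we obtain $\deg G(f,g)\ge mb-q\delta$, completing the induction and hence the proposition.
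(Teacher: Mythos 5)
The paper offers no proof of this proposition; it is quoted from \cite{Karas3} and ultimately rests on the degree estimates of Shestakov and Umirbayev \cite{sh umb2}, so your attempt must be judged on its own terms, and on those terms it has a genuine gap --- one you yourself flag. After writing $G(f,g)=Q(f,g)\,h+R(f,g)$ with $h=g^{p}-c^{-1}f^{s}$, you bound $\deg \bigl(Q(f,g)h\bigr)$ from below, but $\deg G(f,g)$ can still collapse if the leading form of $R(f,g)$ cancels that of $Q(f,g)h$. Ruling this out is not a finishing touch: it is the entire content of the proposition. By your own base-case argument, $\deg R(f,g)=\max_{i}(a\deg r_{i}+ib)$ for $R(f,g)=\sum_{i<p}r_{i}(f)g^{i}$, and since the $x$-degrees $\deg r_{i}$ are not bounded by anything in the construction, this maximum can perfectly well coincide with $\deg \bigl(Q(f,g)h\bigr)$; deciding whether the two leading forms can then annihilate each other is exactly the question that the Shestakov--Umirbayev theory of $*$-reduced pairs and Poisson-bracket filtrations was built to answer. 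Your closing paragraph supplies no argument, only the expectation that the hypotheses $\overline{f}\notin \Bbb{C}[\overline{g}]$, $\overline{g}\notin \Bbb{C}[\overline{f}]$ will ``enter in full force''; in what you actually wrote, those hypotheses are used only to exclude $p=1$ or $s=1$. So the attempt is a reduction of the proposition to a statement of essentially the same difficulty, not a proof.

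Moreover, the ``$+2$ of slack'' you hope will absorb the interference of $R$ is an artifact of mixing two conventions for the degree of the bracket. In this paper $\deg [X_{i},X_{j}]=2$, so the correct upper bound is $\deg [u,v]\leq \deg u+\deg v$ (stated explicitly in the paper), not $\deg u+\deg v-2$. With the paper's convention your bracket computation yields only $\deg h\geq pb-\delta $, and the induction then closes with no margin at all: $\deg \bigl(Q(f,g)h\bigr)\geq mb-q\delta $ exactly. If instead you use your own convention consistently (bracket $=$ the Jacobian minors, without the degree-$2$ factor $[X_{i},X_{j}]$), then your $\delta $ exceeds the one in the statement by $2$, and the inequality you would end up proving is weaker than the asserted one by $2q$. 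Either way there is no slack, so even the heuristic plan for controlling $R(f,g)$ has nothing to run on. The rest of the argument (the no-cancellation base case via $\gcd (p,s)=1$, the derivation of $\overline{f}^{\,s}=c\,\overline{g}^{\,p}$ from a leading-form cancellation, the Leibniz computation $[f,h]=pg^{p-1}[f,g]$) is sound, but it is the routine part.
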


In the above proposition $[f,g]$ means the Poisson bracket of $f$ and $g$
defined as the following formal sum: 
\begin{equation*}
\sum_{1\leq i<j\leq n}\left( \frac{\partial f}{\partial x_{i}}\frac{\partial
g}{\partial x_{j}}-\frac{\partial f}{\partial x_{j}}\frac{\partial g}{%
\partial x_{i}}\right) \left[ X_{i},X_{j}\right]
\end{equation*}
and: 
\begin{equation*}
\deg \left[ f,g\right] =\underset{1\leq i<j\leq n}{\max }\left\{ \left( 
\frac{\partial f}{\partial x_{i}}\frac{\partial g}{\partial x_{j}}-\frac{%
\partial f}{\partial x_{j}}\frac{\partial g}{\partial x_{i}}\right) \left[
X_{i},X_{j}\right] \right\} ,
\end{equation*}
where by definition $\deg \left[ X_{i},X_{j}\right] =2$ for $i\neq j$ and $%
\deg 0=-\infty .$

From the definition of the Poisson bracket we have 
\begin{equation*}
\deg \left[ f,g\right] \leq \deg f+\deg g
\end{equation*}
and by Proposition 1.2.9 of \cite{van den Essen}: 
\begin{equation*}
\deg [f,g]=2+\underset{1\leq i<j\leq n}{\max }\deg \left( \frac{\partial f}{%
\partial x_{i}}\frac{\partial g}{\partial x_{j}}-\frac{\partial f}{\partial
x_{j}}\frac{\partial g}{\partial x_{i}}\right)
\end{equation*}
if $f,g$ are algebraically independent, and $\deg [f,g]=0$ if $f,g$ are
algebraically dependent.

The last result we will need is the following theorem.

\begin{theorem}
\label{tw_type_1-4}\textit{(\cite{sh umb1}, Theorem 3) }Let $%
F=(F_{1},F_{2},F_{3})\,$be a tame automorphism of $\Bbb{C}^{3}.$ If $\deg
F_{1}+\deg F_{2}+\deg F_{3}>3$ (in other words if $F$ is not a linear
automorphism), then $F$ admits either an elementary reduction or a reduction
of types I-IV (see \cite{sh umb1} Definitions 2-4).
\end{theorem}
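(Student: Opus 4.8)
The plan is to follow the degree-decreasing strategy of Shestakov and Umirbaev, treating the assertion as the inductive step of a reduction algorithm. Assign to each tame automorphism the complexity $\deg F=\deg F_{1}+\deg F_{2}+\deg F_{3}$, and note that both an elementary reduction and a reduction of types I--IV are by definition left-compositions $F=E\circ F'$ with a distinguished automorphism $E$ (for an elementary reduction, $E(x,y,z)=(x+G(y,z),y,z)$ after permutation, where $F_{1}'=F_{1}-G(F_{2},F_{3})$) for which $\deg F'<\deg F$. So the theorem amounts to the implication: a non-linear tame $F$ that admits no elementary reduction must admit a reduction of one of the types I--IV. I would first dispose of the case where an elementary reduction exists (nothing to prove), and thereafter assume $F$ is \emph{elementarily reduced}, ordering the coordinates so that $\deg F_{1}\le \deg F_{2}\le \deg F_{3}$.

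The heart of the argument is an analysis of the leading homogeneous parts $\overline{F_{1}},\overline{F_{2}},\overline{F_{3}}$. First I would record what ``no elementary reduction'' buys us: for every $i$ and every $G\in \Bbb{C}[x,y]$ one has $\deg (F_{i}-G(F_{j},F_{k}))\ge \deg F_{i}$, i.e. the top part of $F_{i}$ admits no cancellation against the subalgebra generated by the other two coordinates. Using Proposition \ref{prop_deg_g_fg} (the Poisson-bracket degree estimate) in the contrapositive, I would argue that this forces strong algebraic relations among the leading forms: if the $\overline{F_{i}}$ were in ``general position'' (pairwise algebraically independent, with no leading form lying in $\Bbb{C}$ adjoined to the leading form of another), the estimate would bound $\deg G(F_{j},F_{k})$ uniformly away from $\deg F_{i}$, and a reduction would then be available. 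Hence the top parts must satisfy constrained low-degree dependencies, of the shape $\overline{F_{j}}^{\,a}\sim \overline{F_{k}}^{\,b}$ or $\overline{F_{i}}\in \Bbb{C}[\overline{F_{j}},\overline{F_{k}}]$ realized in a restricted way.

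Next I would bring in tameness. Writing $F$ as a product of elementary automorphisms and choosing a representation of minimal length, a ``peak'' argument on the degree profile of the successive partial products shows that the final elementary factor must interact with the accumulated leading terms in one of only finitely many configurations. The task is then to match each admissible configuration of leading forms against the explicit templates of Definitions 2--4 of \cite{sh umb1}, and to verify by a direct leading-term computation that the corresponding reduction of type I, II, III, or IV applies and strictly lowers $\deg F$. Checking that these cases are exhaustive completes the proof.

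The main obstacle is precisely this classification and the sharp inequalities underlying it. One must establish the principal degree inequality of the two-generated-subalgebra theory (the refinement of Proposition \ref{prop_deg_g_fg}) in the borderline regimes where the estimate is nearly tight, enumerate the finitely many ways the leading forms can fail general position, and confirm the strict degree drop case by case. This computation-heavy core is the genuinely hard part; the passage to it through the complexity $\deg F$ and the elementary-reduction dichotomy is the routine scaffolding.
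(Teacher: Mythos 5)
This theorem is not proved in the paper at all: it is imported verbatim as Theorem 3 of \cite{sh umb1} (Shestakov--Umirbayev) and used as a black box in the proof of Theorem \ref{tw_d1_d2_odd}. So there is no in-paper argument to compare yours against; the only legitimate ``proof'' at the level of this paper is the citation itself, and a genuine proof would have to reproduce the core of Shestakov and Umirbayev's two papers.

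Measured against that standard, your proposal has a genuine gap: it is a strategy outline whose decisive step is deferred, and that deferred step \emph{is} the theorem. The scaffolding you describe (induction on $\deg F_{1}+\deg F_{2}+\deg F_{3}$, reduction to the elementarily reduced case, ordering the coordinates) is indeed routine, but everything after it is not. Two concrete problems. First, the ``peak argument'' on a minimal-length factorization of $F$ into elementary automorphisms does not work as stated: degrees of partial products of elementary automorphisms are not monotone and can collapse in complicated ways, and controlling the interaction of the last elementary factor with the accumulated leading terms is precisely what forced Shestakov and Umirbayev to develop the theory of $\ast$-reduced pairs and sharp lower bounds for $\deg\,[f,g]$; without that machinery the ``finitely many configurations'' you invoke cannot be enumerated, let alone shown to match Definitions 2--4 of \cite{sh umb1}. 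Second, Proposition \ref{prop_deg_g_fg} (the only degree estimate available in this paper) is too weak for the borderline regimes you yourself flag: it bounds $\deg G(f,g)$ from below under genericity hypotheses on $\overline{f},\overline{g}$, but the exhaustiveness of types I--IV requires the refined two-generated-subalgebra inequalities in exactly the degenerate cases where this estimate is tight or its hypotheses fail. In short, you have correctly identified the shape of the Shestakov--Umirbayev argument and correctly located the hard part, but locating it is not proving it; as a proof the proposal establishes only the trivial dichotomy (either an elementary reduction exists or it does not) and leaves the mathematical content untouched.
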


Let us, also, recall that an automorphism $F=(F_{1},F_{2},F_{3})$ admits an
elementary reduction if there exists a polynomial $g\in \Bbb{C}[x,y]$ and a
permutation $\sigma $ of the set $\{1,2,3\}$ such that $\deg (F_{\sigma
(1)}-g(F_{\sigma (2)},F_{\sigma (3)}))<\deg F_{\sigma (1)}.$ In other words
if there exists an elementary automorphism $\tau :\Bbb{C}^{3}\rightarrow 
\Bbb{C}^{3}$ such that $\limfunc{mdeg}\left( \tau \circ F\right) <\limfunc{%
mdeg}F,$ where $\left( d_{1},\ldots ,d_{n}\right) <\left( k_{1},\ldots
,k_{n}\right) $ means that $d_{l}\leq k_{l}$ for all $l\in \left\{ 1,\ldots
,n\right\} $ and $d_{i}<k_{i}$ for at least one $i\in \left\{ 1,\ldots
,n\right\} .$ Recall also that a mapping $\tau =\left( \tau _{1},\ldots
,\tau _{n}\right) :\Bbb{C}^{n}\rightarrow \Bbb{C}^{n}$ is called an
elementary automorphism if there exists $i\in \left\{ 1,\ldots ,n\right\} $
such that 
\begin{equation*}
\tau _{j}\left( x_{1},\ldots ,x_{n}\right) =\left\{ 
\begin{array}{ll}
x_{j} & \text{for }j\neq i \\ 
x_{i}+g\left( x_{1},\ldots ,x_{i-1},x_{i+1},\ldots ,x_{n}\right) & \text{for 
}j=i
\end{array}
.\right.
\end{equation*}

\begin{proof}
\textit{(of Theorem \ref{tw_d1_d2_odd}) }Assume that $F=(F_{1},F_{2},F_{3})$
is an automorphism of $\Bbb{C}^{3}$ such that $\limfunc{mdeg}%
F=(d_{1},d_{2},d_{3}).$ Assume, also, that $d_{3}\notin d_{1}\Bbb{N}+d_{2}%
\Bbb{N}.$ By Theorem \ref{tw_sywester} we have: 
\begin{equation}
d_{3}<(d_{1}-1)(d_{2}-1).  \label{d_3 male_a}
\end{equation}
First of all we show that this hypothetical automorphism $F$ does not admit
reductions of type I-IV.

By the definitions of reductions of types I-IV (see \cite{sh umb1}
Definitions 2-4), if $F=(F_{1},F_{2},F_{3})$ admits a reduction of these
types, then $2|\deg F_{i}$ for some $i\in \{1,2,3\}.$ Thus if $d_{3}$ is
odd, then $F$ does not admit a reduction of types I-IV. Assume that $%
d_{3}=2n $ for some positive integers $n.$

If we assume that $F$ admits a reduction of type I or II, then by the
definition (see \cite{sh umb1} Definition 2 and 3) we have $d_{1}=sn$ or $%
d_{2}=sn$ for some odd $s\geq 3.$ Since $d_{1},d_{2}\leq d_{3}=2n<sn,$ then
we obtain a contradiction.

And, if we assume that $F$ admits a reduction of type III or IV, then by the
definition (see \cite{sh umb1} Definition 4) we have: 
\begin{equation*}
n<d_{1}\leq \tfrac{3}{2}n,\qquad d_{2}=3n
\end{equation*}
or 
\begin{equation*}
d_{1}=\tfrac{3}{2}n,\qquad \tfrac{5}{2}n<d_{2}\leq 3n.
\end{equation*}
Since $d_{1},d_{2}\leq d_{3}=2n<\tfrac{5}{2}n,3n,$ then we obtain a
contradiction. Thus we have proved that our hypothetical automorphism $F$
does not admit a reduction of types I-IV.

Now we will show that it, also, does not admit an elementary reduction.

Assume, by a contrary, that 
\begin{equation*}
(F_{1},F_{2},F_{3}-g(F_{1},F_{2})),
\end{equation*}
where $g\in \Bbb{C}[x,y],$ is an elementary reduction of $%
(F_{1},F_{2},F_{3}).$ Then we have $\deg g(F_{1},F_{2})=\deg F_{3}=d_{3}.$
But, by Proposition \ref{prop_deg_g_fg}, we have 
\begin{equation*}
\deg g(F_{1},F_{2})\geq q(d_{1}d_{2}-d_{1}-d_{2}+\deg [F_{1},F_{2}])+rd_{2},
\end{equation*}
where $\deg _{y}g(x,y)=qd_{1}+r$ with $0\leq r<d_{1}.\,$ Since $F_{1},F_{2}$
are algebraically independent, $\deg [F_{1},F_{2}]\geq 2$ and then 
\begin{equation*}
d_{1}d_{2}-d_{1}-d_{2}+\deg [F_{1},F_{2}]\geq
d_{1}d_{2}-d_{1}-d_{2}+2>(d_{1}-1)(d_{2}-1).
\end{equation*}
This and (\ref{d_3 male_a}) follows that $q=0,$ and that: 
\begin{equation*}
g(x,y)=\sum_{i=0}^{d_{1}-1}g_{i}(x)y^{i}.
\end{equation*}
Since $\func{lcm}(d_{1},d_{2})=d_{1}d_{2},$ then the sets 
\begin{equation*}
d_{1}\Bbb{N},d_{2}+d_{1}\Bbb{N},\ldots ,(d_{1}-1)d_{2}+d_{1}\Bbb{N}
\end{equation*}
are pair wise disjoint. This follows that: 
\begin{equation*}
d_{3}=\deg \left( \sum_{i=0}^{d_{1}-1}g_{i}(F_{1})F_{2}^{i}\right) =%
\underset{i=0,\ldots ,d_{1}-1}{\max }\left( \deg F_{1}\deg g_{i}+i\deg
F_{2}\right) .
\end{equation*}
Thus 
\begin{equation*}
d_{3}\in \bigcup_{r=0}^{d_{1}-1}\left( rd_{2}+d_{1}\Bbb{N}\right) \subset
d_{1}\Bbb{N}+d_{2}\Bbb{N}.
\end{equation*}
This is a contradiction with $d_{3}\notin d_{1}\Bbb{N}+d_{2}\Bbb{N}.$

Now, assume that 
\begin{equation*}
(F_{1},F_{2}-g(F_{1},F_{3}),F_{3}),
\end{equation*}
where $g\in \Bbb{C}[x,y],$ is an elementary reduction of $%
(F_{1},F_{2},F_{3}).$ Since $d_{3}\notin d_{1}\Bbb{N}+d_{2}\Bbb{N},$ then $%
d_{1}\nmid d_{3}.$ This follows that 
\begin{equation*}
p=\frac{d_{1}}{\gcd \left( d_{1},d_{3}\right) }>1.
\end{equation*}
Since $d_{1}\,$is odd number, we also have $p\neq 2.$ Thus by Proposition 
\ref{prop_deg_g_fg} we have 
\begin{equation*}
\deg g(F_{1},F_{3})\geq q(pd_{3}-d_{3}-d_{1}+\deg [F_{1},F_{3}])+rd_{3},
\end{equation*}
where $\deg _{y}g(x,y)=qp+r$ with $0\leq r<p.$ Since $p\geq 3,$ then $%
pd_{3}-d_{3}-d_{1}+\deg [F_{1},F_{3}]\geq 2d_{3}-d_{1}+2>d_{3}.$ Since we
want to have $\deg g(F_{1},F_{3})=d_{2},$ then $q=r=0,$ and then $%
g(x,y)=g(x).$ This means that $d_{2}=\deg g\left( F_{1},F_{3}\right) =\deg
g\left( F_{1}\right) .$ But this is a contradiction with $d_{2}\notin d_{1}%
\Bbb{N}$ (remember that $\gcd \left( d_{1},d_{2}\right) =1$).

Finally, if we assume that $(F_{1}-g(F_{2},F_{3}),F_{2},F_{3})$ is an
elementary reduction of $(F_{1},F_{2},F_{3}),$ then in the same way as in
the previous case we obtain a contradiction.
\end{proof}

\section{Proof of the theorem}

Let $N:\Bbb{C}^{3}\ni (x,y,z)\mapsto
(x+2y(y^{2}+zx)-z(y^{2}+zx)^{2},y-z(y^{2}+zx),z)\in \Bbb{C}^{3}$ be the
Nagata's exmple and let $T:\Bbb{C}^{3}\ni (x,y,z)\mapsto (z,y,x)\in \Bbb{C}%
^{3}.$ We start with the following lemma.

\begin{lemma}
For all $n\in \Bbb{N}$ we have $\limfunc{mdeg}(T\circ
N)^{n}=(4n-3,4n-1,4n+1).$
\end{lemma}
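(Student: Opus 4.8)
The plan is to exploit the classical fact that the polynomial $\sigma = y^{2}+zx$ is a first integral of the Nagata map. Writing $F = T\circ N$ explicitly, one computes
\begin{equation*}
F(x,y,z) = \left( z,\; y - \sigma z,\; x + 2\sigma y - \sigma^{2}z \right),
\end{equation*}
and a direct substitution shows that $\sigma\circ F = (y-\sigma z)^{2} + z(x+2\sigma y - \sigma^{2}z) = y^{2}+zx = \sigma$. Hence $\sigma$ is invariant under every power $F^{n}$. First I would verify this identity, since it is the engine of the whole argument.

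The second step is to turn the invariance into a linear recursion. Setting $(x_{n},y_{n},z_{n}) = F^{n}(x,y,z)$, the relation $F^{n+1} = F\circ F^{n}$ together with $\sigma(x_{n},y_{n},z_{n}) = \sigma$ gives
\begin{equation*}
x_{n+1}=z_{n},\qquad y_{n+1}=y_{n}-\sigma z_{n},\qquad z_{n+1}=x_{n}+2\sigma y_{n}-\sigma^{2}z_{n},
\end{equation*}
where now $\sigma$ is a \emph{fixed} homogeneous polynomial of degree $2$. The point is that the nonlinear iteration collapses to a linear one over $\Bbb{C}[\sigma]$, so the degrees can only grow through powers of $\sigma$.

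Then I would prove by induction on $n\geq 1$ the sharper statement that $\deg x_{n}=4n-3$, $\deg y_{n}=4n-1$, $\deg z_{n}=4n+1$, together with the explicit leading part $\overline{z_{n}} = (-1)^{n}\sigma^{2n}z$. The base case $n=1$ is the direct computation above, giving multidegree $(1,3,5)$. For the inductive step, in each of the three formulas one term dominates strictly in degree: $z_{n}$ in the first, $\sigma z_{n}$ in the second, and $\sigma^{2}z_{n}$ in the third, since $4n+3 > 4n-1$ and $4n+5 > 4n+1 > 4n-3$. Passing to highest homogeneous parts yields $\overline{x_{n+1}}=\overline{z_{n}}$, $\overline{y_{n+1}}=-\sigma\,\overline{z_{n}}$ and $\overline{z_{n+1}}=-\sigma^{2}\,\overline{z_{n}}$, so that $\overline{z_{n+1}}=(-1)^{n+1}\sigma^{2(n+1)}z$ and the degree count is exact.

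The only real obstacle is ruling out cancellation of leading terms, which is precisely what the explicit formula $\overline{z_{n}}=(-1)^{n}\sigma^{2n}z$ controls: because $\sigma\neq 0$ and $\Bbb{C}[x,y,z]$ is an integral domain, $\overline{z_{n}}$ is a nonzero homogeneous polynomial of degree $4n+1$, so at no stage does the dominant term of a recursion vanish. This guarantees that the strict inequalities between degrees are never spoiled, and the induction closes, giving $\limfunc{mdeg}(T\circ N)^{n} = (\deg x_{n},\deg y_{n},\deg z_{n}) = (4n-3,4n-1,4n+1)$ as claimed.
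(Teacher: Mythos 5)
Your proof is correct and takes essentially the same route as the paper: both hinge on the invariance of $\sigma=y^{2}+zx$ under $T\circ N$, which turns the iteration into a recursion with a fixed degree-$2$ multiplier, followed by induction on the three degrees. Your explicit tracking of the leading form $\overline{z_{n}}=(-1)^{n}\sigma^{2n}z$ is a small refinement the paper leaves implicit (it simply notes that the dominant term in each coordinate has strictly larger degree than the others), but the argument is the same.
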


\begin{proof}
We have $T\circ N(x,y,z)=(z,y-z(y^{2}+zx),x+2y(y^{2}+zx)-z(y^{2}+zx)^{2}),$
so the above equality is true for $n=1.$ Let $(f_{n},g_{n},h_{n})=T\circ N$
for $f_{n},g_{n},h_{n}\in \Bbb{C[}X,Y,Z].$ One can see that $%
g_{1}^{2}+h_{1}f_{1}=Y^{2}+ZX,$ and by induction that $%
g_{n}^{2}+h_{n}f_{n}=Y^{2}+ZX$ for any $n\in \Bbb{N}\backslash \{0\}.\Bbb{\,}
$Thus 
\begin{eqnarray*}
(f_{n+1},g_{n+1},h_{n+1}) = \left( T\circ N\right) \left(
f_{n},g_{n},h_{n}\right) \\
=\left( h_{n},g_{n}-h_{n}\left( g_{n}^{2}+h_{n}f_{n}\right)
,f_{n}+2h_{n}\left( g_{n}^{2}+h_{n}f_{n}\right) -h_{n}\left(
g_{n}^{2}+h_{n}f_{n}\right) ^{2}\right) \\
=\left( h_{n},g_{n}-h_{n}\left( Y^{2}+ZX\right) ,f_{n}+2h_{n}\left(
Y^{2}+ZX\right) -h_{n}\left( Y^{2}+ZX\right) ^{2}\right) .
\end{eqnarray*}
So if we assume that $\limfunc{mdeg}(f_{n},g_{n},h_{n})=(4n-3,4n-1,4n+1),$
we obtain \allowbreak $\limfunc{mdeg}(f_{n+1},g_{n+1},h_{n+1})=(4n+1,\left(
4n+1\right) +2,\left( 4n+1\right) +2\cdot 2)=(4(n+1)-3,4(n+1)-1,4(n+1)+1).$
\end{proof}

By the above lemma and Theorem \ref{tw_d1_d2_odd} we obtain the following
theorem.

\begin{theorem}
For every $n\in \Bbb{N}$ the automorphism $(T\circ N)^{n}$ is wild.
\end{theorem}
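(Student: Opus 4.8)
The plan is to use the preceding Lemma together with Theorem~\ref{tw_d1_d2_odd} to show that the multidegree $(4n-3,4n-1,4n+1)$ cannot belong to $\limfunc{mdeg}(\limfunc{Tame}(\Bbb{C}^{3}))$, and hence that $(T\circ N)^{n}$, which realizes this multidegree, must be wild. First I would fix $n\in\Bbb{N}\backslash\{0\}$ (the case $n=0$ gives the identity and must be treated separately or excluded, since the identity is tame) and set $d_{1}=4n-3,\ d_{2}=4n-1,\ d_{3}=4n+1$. I would verify the hypotheses of Theorem~\ref{tw_d1_d2_odd}: clearly $d_{1},d_{2},d_{3}$ are all odd, and one checks $d_{3}\geq d_{2}>d_{1}\geq 3$ holds for $n\geq 1$. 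The key arithmetic input is that $\gcd(d_{1},d_{2})=\gcd(4n-3,4n-1)=1$, which follows because any common divisor of $4n-3$ and $4n-1$ divides their difference $2$, and both are odd, so the gcd is $1$.

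Having checked the hypotheses, the crux is to show $d_{3}\notin d_{1}\Bbb{N}+d_{2}\Bbb{N}$, for then Theorem~\ref{tw_d1_d2_odd} immediately gives $(d_{1},d_{2},d_{3})\notin\limfunc{mdeg}(\limfunc{Tame}(\Bbb{C}^{3}))$. I expect this to be the main obstacle, though it is really an elementary counting argument. The point is that $4n+1$ is too small to be written as $k_{1}(4n-3)+k_{2}(4n-1)$ with $k_{1},k_{2}\in\Bbb{N}$. Indeed, if either $k_{1}\geq 1$ and $k_{2}\geq 1$, then the sum is at least $(4n-3)+(4n-1)=8n-4>4n+1$ for $n\geq 2$; the cases where one coefficient is zero reduce to asking whether $4n+1$ is a multiple of $4n-3$ or of $4n-1$, and since $0<4n+1<2(4n-3)$ and $0<4n+1<2(4n-1)$ for $n$ large enough, the only multiples available are $4n-3$ and $4n-1$ themselves, neither of which equals $4n+1$. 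A clean way to package this uniformly is via the Frobenius bound in Theorem~\ref{tw_sywester}: I would show $4n+1<(d_{1}-1)(d_{2}-1)=(4n-4)(4n-2)$ for all relevant $n$, which holds comfortably, but this alone does not prove non-representability, so the direct case analysis above is what actually settles it. The small cases $n=1,2$ should be checked by hand to make sure no degenerate representation sneaks in.

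Once $d_{3}\notin d_{1}\Bbb{N}+d_{2}\Bbb{N}$ is established, Theorem~\ref{tw_d1_d2_odd} yields $(4n-3,4n-1,4n+1)\notin\limfunc{mdeg}(\limfunc{Tame}(\Bbb{C}^{3}))$. By the preceding Lemma, $\limfunc{mdeg}(T\circ N)^{n}=(4n-3,4n-1,4n+1)$, so this multidegree is realized by the automorphism $(T\circ N)^{n}$. Since every tame automorphism has its multidegree in $\limfunc{mdeg}(\limfunc{Tame}(\Bbb{C}^{3}))$ by definition, and this multidegree is \emph{not} in that set, the automorphism $(T\circ N)^{n}$ cannot be tame, i.e. it is wild. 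I would be careful to phrase the conclusion for $n\geq 1$ and to note that $T$ is a linear automorphism and $N$ is the Nagata automorphism, so $(T\circ N)^{n}$ is genuinely a polynomial automorphism of $\Bbb{C}^{3}$ to which the notion of tameness applies.
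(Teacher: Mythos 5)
Your argument is essentially the paper's for $n\geq 2$, but it has a genuine gap at $n=1$, and that gap cannot be closed within your framework. For $n=1$ the multidegree is $(d_{1},d_{2},d_{3})=(1,3,5)$, so the hypothesis $d_{1}\geq 3$ of Theorem~\ref{tw_d1_d2_odd} fails (your claim that $d_{3}\geq d_{2}>d_{1}\geq 3$ holds for all $n\geq 1$ is false at $n=1$), and, worse, $5=5\cdot 1\in 1\cdot \Bbb{N}+3\Bbb{N}$, so $d_{3}\in d_{1}\Bbb{N}+d_{2}\Bbb{N}$ after all. Indeed the introduction of the paper notes explicitly that $(1,3,5)\in \limfunc{mdeg}(\limfunc{Tame}(\Bbb{C}^{3}))$, since $(x,y,z)\mapsto (x,y+x^{3},z+x^{5})$ is tame. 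So there is no multidegree obstruction at $n=1$: your planned hand-check of the small cases would reveal that a representation does sneak in, and you would be left with no argument for that case.

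The paper closes the case $n=1$ by a different route: $T$ is linear, hence tame, so $(T\circ N)^{1}=T\circ N$ is wild if and only if the Nagata automorphism $N$ is wild, which is precisely the theorem of Shestakov and Umirbayev. You need to invoke that result (or some other proof of the wildness of $N$) for $n=1$. For $n\geq 2$ your verification of oddness, coprimality, and the non-representability of $4n+1$ as $k_{1}(4n-3)+k_{2}(4n-1)$ is correct and matches the paper's, which compresses the non-representability to the single observation that $4n-3>2$, so that every nonzero combination other than the generators themselves already exceeds $4n+1$.
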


\begin{proof}
For $n=1$ this is the result of Shestakov and Umirbayev \cite{sh umb1,sh
umb2}. So we can assume that $n\geq 2.$ The numbers $4n-3,4n-1$ are odd and $%
\gcd (4n-3,4n-1)=\gcd (4n-3,2)=1.$ Since $4n-3>2,$ $4n+1\notin (4n-3)\Bbb{N}%
+(4n-1)\Bbb{N}.$ Then by Theorem \ref{tw_d1_d2_odd} $(4n-3,4n-1,4n+1)\notin 
\limfunc{mdeg}(\limfunc{Tame}(\Bbb{C}^{3}))$ for $n>1.$ This proves Theorem 
\ref{main} and that $(T\circ N)^{n}$ is not a tame automphism.
\end{proof}

Let us notice that in the proof of the above theorem we, aslo, proved that
\begin{equation*}
\left\{ \left( 4n-3,4n-1,4n+1\right) :n\in \Bbb{N},n\geq 2\right\} \subset 
\limfunc{mdeg}(\limfunc{Aut}(\Bbb{C}^{3}))\backslash \limfunc{mdeg}(\limfunc{%
Tame}(\Bbb{C}^{3})).
\end{equation*}
This proves Theorem \ref{tw_main}.

\vspace{1cm}

\textsc{Marek Kara\'{s}\newline
Instytut Matematyki\newline
Uniwersytetu Jagiello\'{n}skiego\newline
ul. \L ojasiewicza 6}\newline
\textsc{30-348 Krak\'{o}w\newline
Poland\newline
} e-mail: Marek.Karas@im.uj.edu.pl\vspace{0.5cm}

and\vspace{0.5cm}

\textsc{Jakub Zygad\l o\newline
Instytut Informatyki\newline
Uniwersytetu Jagiello\'{n}skiego\newline
ul. \L ojasiewicza 6}\newline
\textsc{30-348 Krak\'{o}w\newline
Poland\newline
} e-mail: Jakub.Zygadlo@ii.uj.edu.p

\end{document}